\def\'#1{\ifx#1i{\accent"13 \i}\else{\accent"13 #1}\fi}
\newtheorem{theorem}{Theorem}[section]
\title{The $4$-girth-thickness of the complete graph}
\author{Christian Rubio-Montiel\\ \emph{christian@cs.cinvestav.mx}
\\UMI LAFMIA 3175 CNRS at CINVESTAV-IPN\\Mexico City 07300, Mexico}
\begin{document}
\maketitle
\begin{abstract}
In this paper, we define the $4$-girth-thickness $\theta(4,G)$ of a graph $G$ as the minimum number of planar subgraphs of girth at least $4$ whose union is $G$. We prove that the $4$-girth-thickness of an arbitrary complete graph $K_n$, $\theta(4,K_n)$, is $\left\lceil \frac{n+2}{4}\right\rceil$ for $n\not=6,10$ and $\theta(4,K_6)=3$.
\end{abstract}
\textbf{Keywords:} Thickness, planar decomposition, girth, complete graph.

\textbf{2010 Mathematics Subject Classification:} 05C10.


\section{Introduction}
A finite graph $G$ is \emph{planar} if it can be embedded in the plane without any two of its edges crossing. A planar graph of order $n$ and girth $g$ has size at most $\frac{g}{g-2}(n-2)$ (see \cite{MR2368647}), and an acyclic graph of order $n$ has size at most $n-1$, in this case, we define its girth as $\infty$. The \emph{thickness} $\theta(G)$ of a graph $G$ is the minimum number of planar subgraphs whose union is $G$; i.e. the minimum number of planar subgraphs into which the edges of $G$ can be partitioned.

The thickness was introduced by Tutte \cite{MR0157372} in 1963. Since then, exact results have been obtained when $G$ is a complete graph \cite{MR0460162,MR0164339,MR0186573}, a complete multipartite graph \cite{MR0158388,MR3243852,YY} or a hypercube \cite{MR0211901}. Also, some generalizations of the thickness for the complete graph $K_n$ have been studied such that the outerthickness $\theta_o$, defined similarly but with outerplanar instead of planar \cite{MR1100049}, and the $S$-thickness $\theta_S$, considering the thickness on a surfaces $S$ instead of the plane \cite{MR0245475}. See also the survey \cite{MR1617664}.

We define the \emph{$g$-girth-thickness} $\theta(g,G)$ of a graph $G$ as the minimum number of planar subgraphs of girth at least $g$ whose union is $G$. Note that the $3$-girth-thickness $\theta(3,G)$ is the usual thickness and the $\infty$-girth-thickness $\theta(\infty,G)$ is the \emph{arboricity number}, i.e.  the minimum number of acyclic subgraphs into which $E(G)$ can be partitioned. In this paper, we obtain the $4$-girth-thickness of an arbitrary complete graph of order $n\not=10$.


\section{The exact value of $\theta(4,K_n)$ for $n\not=10$}\label{Section2}

Since the complete graph $K_n$ has size $\binom{n}{2}$ and a planar graph of order $n$ and girth at least $4$ has size at most $2(n-2)$ for $n\geq 3$ and $n-1$ for $n\in\{1,2\}$ then the $4$-girth-thickness of $K_n$ is at least \[\left\lceil \frac{n(n-1)}{2(2n-4)}\right\rceil =\left\lceil \frac{n+1}{4}+\frac{1}{2n-4}\right\rceil =\left\lceil \frac{n+2}{4}\right\rceil\] for $n\geq 3$ and also $\left\lceil \frac{n+2}{4}\right\rceil$ for $n\in\{1,2\}$, we have the following theorem.

\begin{theorem} \label{teo1}
The $4$-girth-thickness $\theta(4,K_n)$ of $K_n$ equals $\left\lceil \frac{n+2}{4}\right\rceil$ for $n\not=6,10$ and $\theta(4,K_6)=3$.
\end{theorem}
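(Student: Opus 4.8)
The lower bound $\theta(4,K_n)\ge \left\lceil \frac{n+2}{4}\right\rceil$ is already in hand, so the whole task is to match it from above. My first move is a reduction. Deleting edges from a girth-$\ge 4$ planar decomposition of a graph yields such a decomposition of any subgraph (subgraphs of planar graphs are planar, and removing edges cannot lower the girth), so $\theta(4,\cdot)$ is monotone under subgraphs and $\theta(4,K_m)\le \theta(4,K_n)$ for $m\le n$. Since $\left\lceil \frac{n+2}{4}\right\rceil$ takes the constant value $k$ on each block $n\in\{4k-5,4k-4,4k-3,4k-2\}$, it suffices to realize the bound at the top of each block: proving $\theta(4,K_{4k-2})\le k$ forces $\theta(4,K_m)=k$ for every $m$ in that block. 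The excluded values $6$ and $10$ are exactly the block-tops for $k=2$ and $k=3$, so there I would instead cover the rest of the block through its largest non-exceptional member, i.e. show $\theta(4,K_5)\le 2$ and $\theta(4,K_9)\le 3$, and treat $K_6$ on its own (the cases $n\le 2$ are trivial).

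This leaves a few explicit small decompositions, namely $K_5$ into $2$ pieces, $K_9$ into $3$, and a base case $K_{14}$ into $4$, together with the special value $\theta(4,K_6)=3$. For $K_6$ the edge count alone does not forbid $2$, since $2\cdot(2\cdot 6-4)=16\ge \binom{6}{2}=15$; I would rule $2$ out by a structural argument — a $15$-edge union of two $6$-vertex triangle-free planar graphs would force one to be a quadrangulation with $8$ edges and the other with $7$, and I would derive a contradiction by examining the very few near-maximal configurations on $6$ vertices — and then exhibit three triangle-free planar pieces explicitly.

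For the generic range $k\ge 4$ I would build the decomposition of $K_{4k-2}$ from one extremal triangle-free planar template: the quadrangulation consisting of an even cycle $C_{4k-4}$ together with two poles joined to alternate cycle-vertices, which is bipartite, planar, and attains the extremal size $2(4k-2)-4=8k-8$. The plan is to partition $E(K_{4k-2})$ into $k$ such copies, all but one taken maximal and the last carrying the remaining $6k-5$ edges, the totals agreeing because $k(8k-8)-\binom{4k-2}{2}=2k-3$. Concretely I would either describe the copies by a rotation scheme on a labelled vertex set, or set up an induction $K_{4k-2}\to K_{4k+2}$ that inserts four new vertices into four distinct quadrilateral faces of each existing piece; joining each new vertex to the two opposite corners of its face preserves the quadrangulation and adds exactly the eight edges per piece that the increase from $4k-2$ to $4k+2$ vertices permits, after which the leftover edges are collected into one new triangle-free planar piece, with the induction based at $K_{14}$.

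I expect the main obstacle to be precisely this packing. The edge budget is tight — the slack over $k$ maximal quadrangulations is only $2k-3$ — so every piece is forced to be near-extremal, and the difficulty is to keep planarity, girth at least $4$ (no triangle and no repeated edge), pairwise edge-disjointness, and complete coverage all true at once. In the inductive realization the delicate points are that each near-maximal old piece admits four face-insertions using only edges not already spent elsewhere, and that the six mutual edges among the four inserted vertices (a $K_4$, which is not triangle-free) are routed so that the final leftover piece remains triangle-free; controlling this bookkeeping uniformly in $k$ is where the real work lies.
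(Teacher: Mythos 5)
Your reduction skeleton is sound and in fact mirrors the paper's: the counting lower bound, monotonicity of $\theta(4,\cdot)$ under taking subgraphs, the observation that $\left\lceil \frac{n+2}{4}\right\rceil$ is constant on blocks of four consecutive values with tops $4k-2$, the identification of $6$ and $10$ as exceptional block tops to be bypassed via $K_5$ and $K_9$, and the need to treat $K_6$ separately. (For $K_6$, note the paper gets the lower bound in one line from Ramsey's theorem, since any $2$-coloring of $E(K_6)$ contains a monochromatic triangle; your proposed case analysis of near-maximal triangle-free planar pairs is unnecessary work, though it could be made to succeed.) The arithmetic you do is also correct: the slack of $k$ extremal pieces over $\binom{4k-2}{2}$ is indeed $2k-3$.

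The genuine gap is that the constructive heart of the theorem --- an actual decomposition of $K_{4k-2}$ into $k$ triangle-free planar subgraphs --- is never produced; you name two candidate routes and yourself defer the ``real work.'' Moreover, the inductive route you describe fails as stated, not merely for bookkeeping reasons: if each of the $k$ old pieces absorbs exactly eight new--old edges by placing the four new vertices in four \emph{distinct} quadrilateral faces, then no old piece can contain an edge between two new vertices (vertices inserted into distinct faces of a plane graph cannot be adjacent there), so all six edges of the $K_4$ on the new vertices are forced into the single leftover piece, which then contains triangles. Repairing this requires breaking the uniform ``eight edges per old piece'' pattern and splitting the $K_4$ edges across pieces that have room to spare, which is precisely the unresolved part. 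The paper sidesteps this trap with a different architecture: it first decomposes $K_{4k}$ into $k$ maximal pieces --- each built from one Hamiltonian path of a cyclic factorization of $K_{2k}$, its primed twin on a disjoint copy of $K_{2k}$, and a zigzag of cross edges --- plus one very sparse piece, a perfect matching; it then adds only \emph{two} vertices $x,y$, giving each maximal piece exactly $4$ new edges (which keeps it exactly extremal) and dumping the remaining $4k+1$ edges onto the matching piece, which has ample capacity. Concentrating all the slack in one sparse piece is exactly the idea your all-near-extremal packing lacks, and without a worked construction your argument does not prove the upper bound for any $k\ge 4$.
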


\begin{proof}
Figure \ref{Fig1} displays equality for $n\leq 5$.

\begin{figure}[!htbp]
\begin{center}
\includegraphics[scale=0.7]{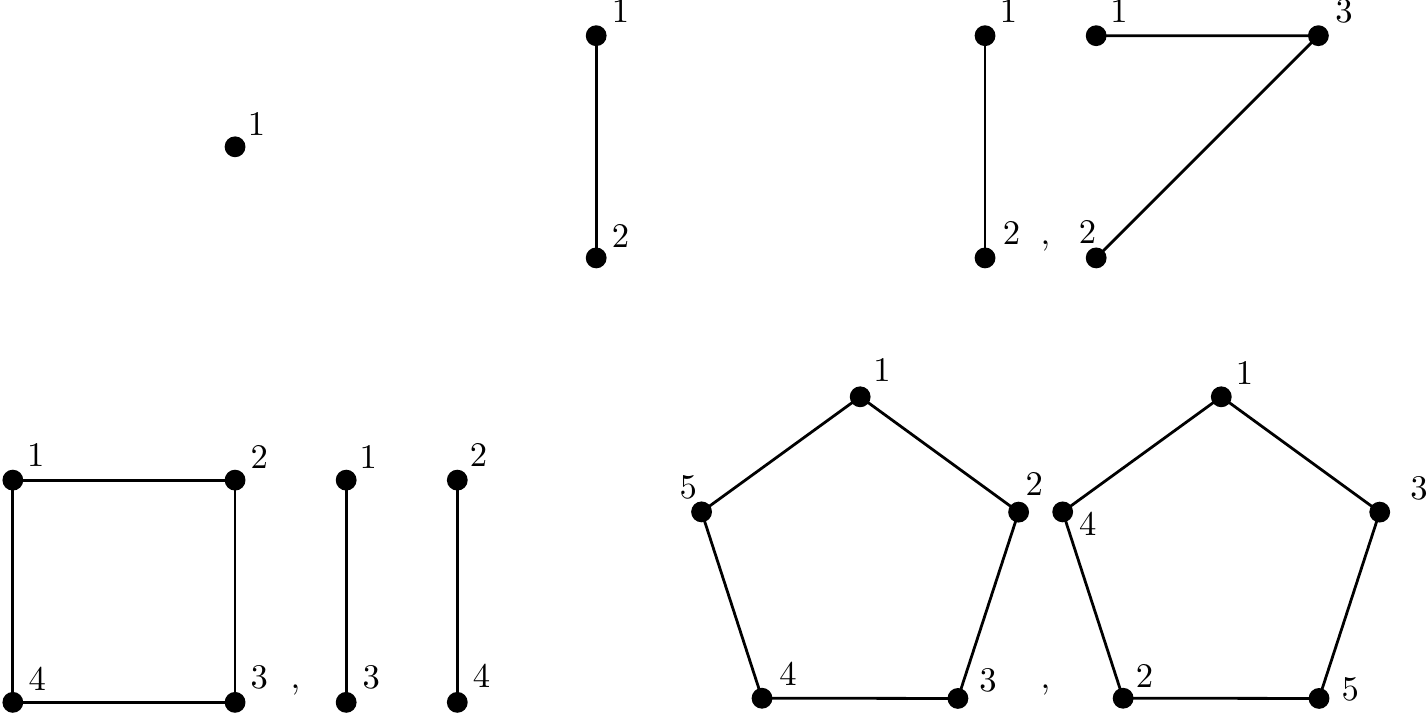}
\caption{\label{Fig1} $\theta(4,K_n)=\left\lceil \frac{n+2}{4}\right\rceil$ for $n=1,2,3,4,5$.}
\end{center}
\end{figure}

To prove that $\theta(4,K_6)=3>\left\lceil \frac{6+2}{4}\right\rceil=2$, suppose that $\theta(4,K_6)=2$. This partition define an edge coloring of $K_6$ with two colors. By Ramsey's Theorem, some part contains a triangle obtaining a contradiction for the girth $4$. Figure \ref{Fig2} shows a partition of $K_6$ into tree planar subgraphs of girth at least $4$.

\begin{figure}[!htbp]
\begin{center}
\includegraphics[scale=0.7]{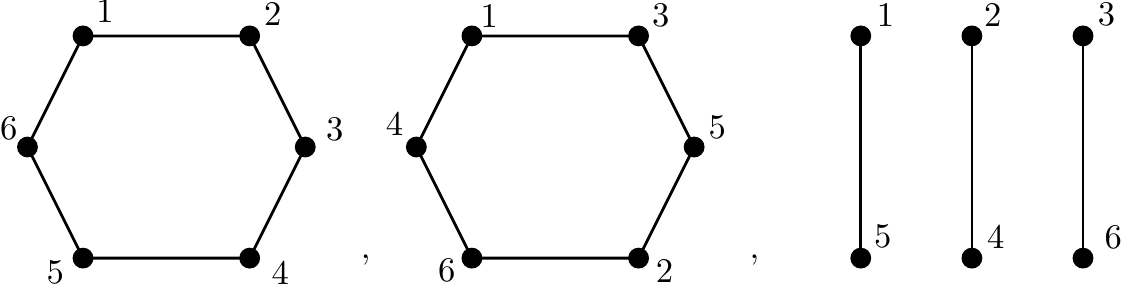}
\caption{\label{Fig2} $\theta(4,K_6)=3$.}
\end{center}
\end{figure}

For the remainder of this proof, we need to distinguish four cases, namely, when $n=4k-1$, $n=4k$, $n=4k+1$ and $n=4k+2$ for $k\geq2$. Note that in each case, the lower bound of the $4$-girth thickness require at least $k+1$ elements. To prove our theorem, we exhibit a decomposition of $K_{4k}$ into $k+1$ planar graphs of girth at least $4$. The other three cases are based in this decomposition. The case of $n=4k-1$ follows because $K_{4k-1}$ is a subgraph of $K_{4k}$. For the case of $n=4k+2$, we add two vertices and some edges to the decomposition obtained in the case of $n=4k$. The last case follows because $K_{4k+1}$ is a subgraph of $K_{4k+2}$. In the proof, all sums are taken modulo $2k$.

\begin{enumerate}
\item Case $n=4k$. It is well-known that a complete graph of even order contains a cyclic factorization of Hamiltonian paths, see \cite{MR2450569}. Let $G$ be a subgraph of $K_{4k}$ isomorphic to $K_{2k}$. Label its vertex set $V(G)$ as $\{v_1,v_2,\dots,v_{2k}\}$. Let $\mathcal{F}_1$ be the Hamiltonian path with edges \[v_1v_2,v_2v_{2k},v_{2k}v_3,v_3v_{2k-1},\dots,v_{k+2}v_{k+1}.\]
Let $\mathcal{F}_i$ be the Hamiltonian path with edges \[v_{1+i-1}v_{2+i-1},v_{2+i-1}v_{2k+i-1},v_{2k+i-1}v_{3+i-1},v_{3+i-1}v_{2k-1+i-1},\dots,v_{k+2+i-1}v_{k+1+i-1},\]
where $i\in\{2,3,\dots,k\}$.

Such factorization of $G$ is the partition $\{E(\mathcal{F}_1),E(\mathcal{F}_2),\dots,E(\mathcal{F}_k)\}$. We remark that the center of $\mathcal{F}_i$ has the edge $e=v_{i+\left\lceil \frac{k}{2}\right\rceil}v_{i+\left\lceil \frac{3k}{2}\right\rceil}$, see Figure \ref{Fig3}.
\begin{figure}[!htbp]
\begin{center}
\includegraphics{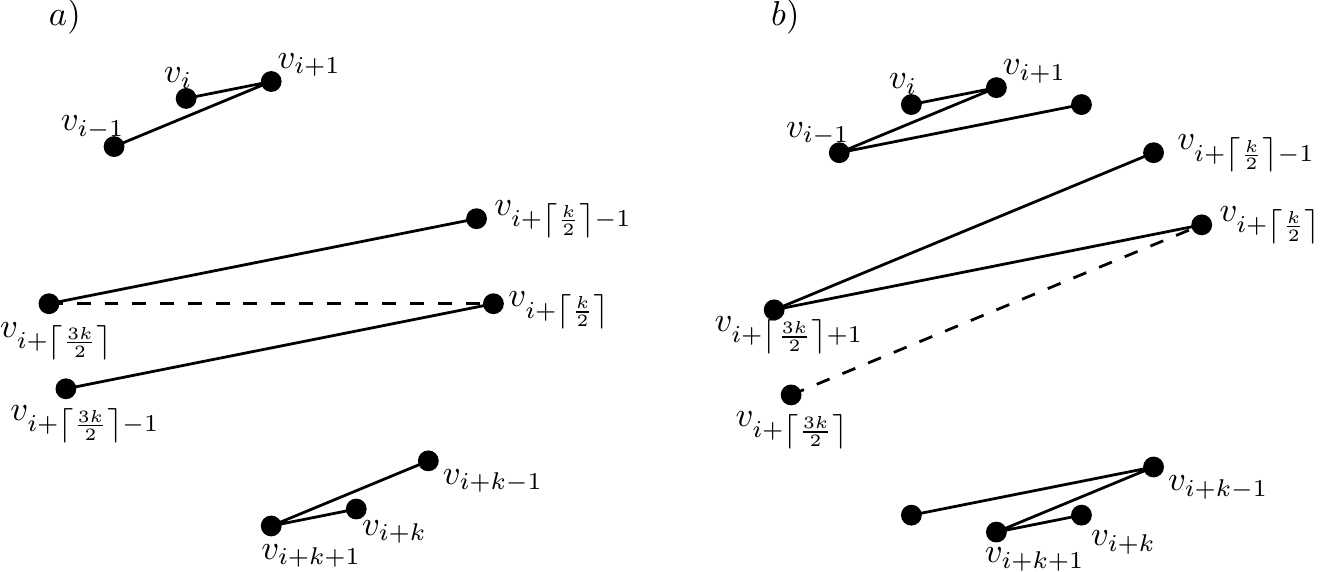}
\caption{\label{Fig3} The Hamiltonian path $\mathcal{F}_i$: Left $a)$: The dashed edge $e$ for $k$ odd. Right $b)$ The dashed edge $e$ for $k$ even.}
\end{center}
\end{figure}

Now, consider the complete subgraph $G'$ of $K_{4k}$ such that $G'=K_{4k}\setminus V(G)$. Label its vertex set $V(G')$ as $\{v'_1,v'_2,\dots,v'_{2k}\}$ and consider the factorization, similarly as before, $\{E(\mathcal{F}'_1),E(\mathcal{F}'_2),\dots,E(\mathcal{F}'_k)\}$ where $\mathcal{F}'_i$ is the Hamiltonian path with edges \[v'_{i}v'_{i+1},v'_{i+1}v'_{i-1},v'_{i-1}v'_{i+2},v'_{i+2}v'_{i-2},\dots,v'_{i+k+1}v'_{i+k},\]
where $i\in\{1,2,\dots,k\}$.

Next, we construct the planar subgraphs $G_1$, $G_2$,...,$G_{k-1}$ and $G_k$ of girth $4$, order $4k$ and size $8k-4$ (observe that $2(4k-2)=8k-4$), and also the matching $G_{k+1}$, as follows. Let $G_i$ be a spanning subgraph of $K_{4k}$ with edges $E(\mathcal{F}_i)\cup E(\mathcal{F}'_i)$ and \[v_{i}v'_{i+1},v'_{i}v_{i+1}, v_{i+1}v'_{i-1},v'_{i+1}v_{i-1},
v_{i-1}v'_{i+2},v'_{i-1}v_{i+2},\dots,
v_{i+k+1}v'_{i+k},v'_{i+k+1}v_{i+k}\]
where $i\in\{1,2,\dots,k\}$; and let $G_{k+1}$ be a perfect matching with edges $v_jv'_j$ for $j\in\{1,2,\dots,2k\}$. Figure \ref{Fig4} shows $G_i$ is a planar graph of girth at least $4$.
\begin{figure}[!htbp]
\begin{center}
\includegraphics{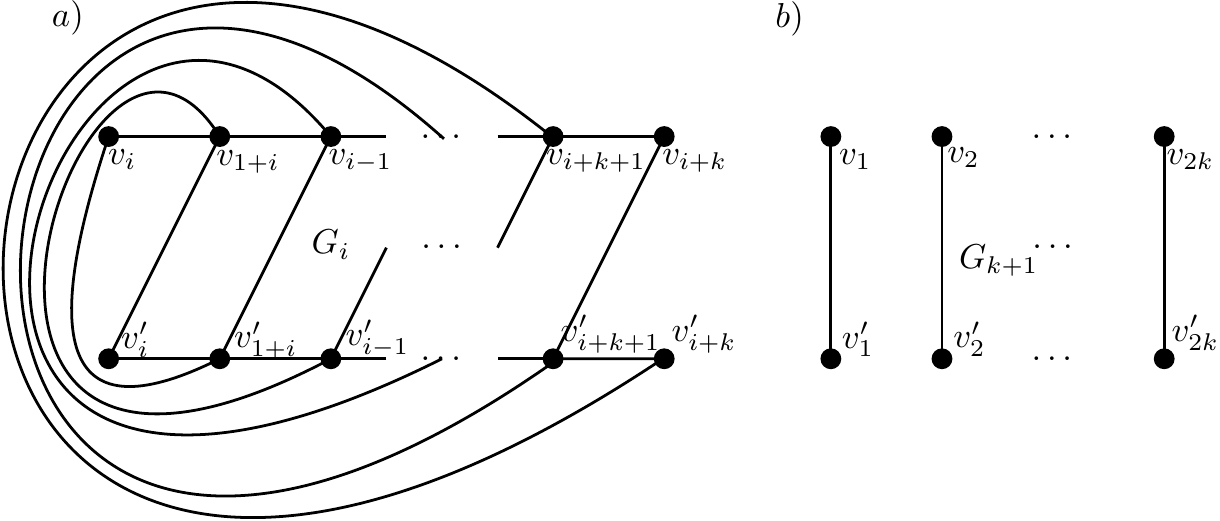}
\caption{\label{Fig4} Left $a)$: The graph $G_i$ for any $i\in\{1,2,\dots,k\}$. Right $b)$ The graph $G_{k+1}$.}
\end{center}
\end{figure}

To verify that $K_{4k}=\underset{i=1}{\overset{k+1}{\bigcup}}G_{i}$: 1) If the edge $v_{i_1}v_{i_2}$ of $G$ belongs to the factor $\mathcal{F}_i$ then $v_{i_1}v_{i_2}$ belongs to $G_i$. If the edge is primed, belongs to $G'_i$. 2) The edge $v_{i_1}v'_{i_2}$ belongs to $G_{k+1}$ if and only if $i_1=i_2$, otherwise it belongs to the same graph $G_i$ as $v_{i_1}v_{i_2}$. Similarly in the case of $v'_{i_1}v_{i_2}$ and the result follows.

\item Case $n=4k-1$. Since $K_{4k-1}\subset K_{4k}$, we have \[k+1\leq \theta(4,K_{4k-1})\leq\theta(4,K_{4k})\leq k+1.\]

\item Case $n=4k+2$ (for $k\not= 2$). Let $\{G_1,\dots,G_{k+1}\}$ be the planar decomposition of $K_{4k}$ constructed in the Case 1. We will add the two new vertices $x$ and $y$ to every planar subgraph $G_i$, when $1\leq i \leq k+1$, and we will add $4$ edges to each $G_i$, when $1\leq i \leq k$, and $4k+1$ edges to $G_{k+1}$ such that the resulting new subgraphs of $K_{4k+2}$ will be planar. Note that $\binom{4k}{2}+4k+4k+1=\binom{4k+2}{2}$.

To begin with, we define the graph $H_{k+1}$ adding the vertices $x$ and $y$ to the planar subgraph $G_{k+1}$ and the $4k+1$ edges \[\{xy,xv_1,xv'_2,xv_3,xv'_4,\dots,xv_{2k-1},xv'_{2k},yv'_1,yv_2,yv'_3,yv_4,\dots,yv'_{2k-1},yv_{2k}\}.\]
The graph $H_{k+1}$ has girth $4$, see Figure \ref{Fig5}.
\begin{figure}[!htbp]
\begin{center}
\includegraphics{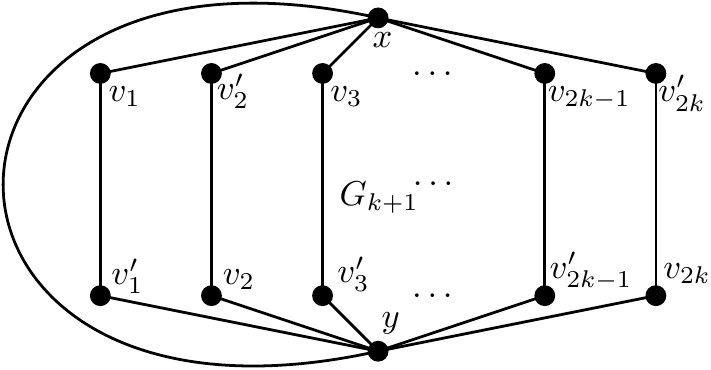}
\caption{\label{Fig5} The graph $H_{k+1}$.}
\end{center}
\end{figure}

In the following, for $1\leq i\leq k$, by adding vertices $x$ and $y$ to $G_i$ and adding $4$ edges to $G_i$, we will get a new planar graph $H_i$ such that $\{H_1,\dots,H_{k+1}\}$ is a planar decomposition of $K_{4k+2}$ such that the girth of every element is $4$. To achieve it, the given edges to the graph $H_i$ will be $v'_jx,xv_{j-1},v_jy,yv'_{j-1}$, for some odd $j\in\{1,3,\dots,2k-1\}$.

According to the parity of $k$, we have two cases:
\begin{itemize}
\item Suppose $k$ odd. For odd $i\in\{1,2,\dots,k\}$, we define the graph $H_{i}$ adding the vertices $x$ and $y$ to the planar subgraph $G_{i}$ and the $4$ edges \[\{xv'_{i+\left\lceil \frac{3k}{2}\right\rceil-1},xv_{i+\left\lceil \frac{3k}{2}\right\rceil},yv_{i+\left\lceil \frac{3k}{2}\right\rceil-1},yv'_{i+\left\lceil \frac{3k}{2}\right\rceil}\}\]
when $\left\lceil \frac{k}{2}\right\rceil$ is even, otherwise
\[\{yv'_{i+\left\lceil \frac{3k}{2}\right\rceil-1},yv_{i+\left\lceil \frac{3k}{2}\right\rceil},xv_{i+\left\lceil \frac{3k}{2}\right\rceil-1},xv'_{i+\left\lceil \frac{3k}{2}\right\rceil}\}.\]
Additionally, for even $i\in\{1,2,\dots,k\}$, we define the graph $H_{i}$ adding the vertices $x$ and $y$ to the planar subgraph $G_{i}$ and the $4$ edges \[\{xv'_{i+\left\lceil \frac{k}{2}\right\rceil-1},xv_{i+\left\lceil \frac{k}{2}\right\rceil},yv_{i+\left\lceil \frac{k}{2}\right\rceil-1},yv'_{i+\left\lceil \frac{k}{2}\right\rceil}\}\]
when $\left\lceil \frac{k}{2}\right\rceil$ is even, otherwise
\[\{yv'_{i+\left\lceil \frac{k}{2}\right\rceil-1},yv_{i+\left\lceil \frac{k}{2}\right\rceil},xv_{i+\left\lceil \frac{k}{2}\right\rceil-1},xv'_{i+\left\lceil \frac{k}{2}\right\rceil}\}.\]
Note that the graph $H_{i}$ has girth $4$ for all $i$, see Figure \ref{Fig6}.
\begin{figure}[!htbp]
\begin{center}
\includegraphics{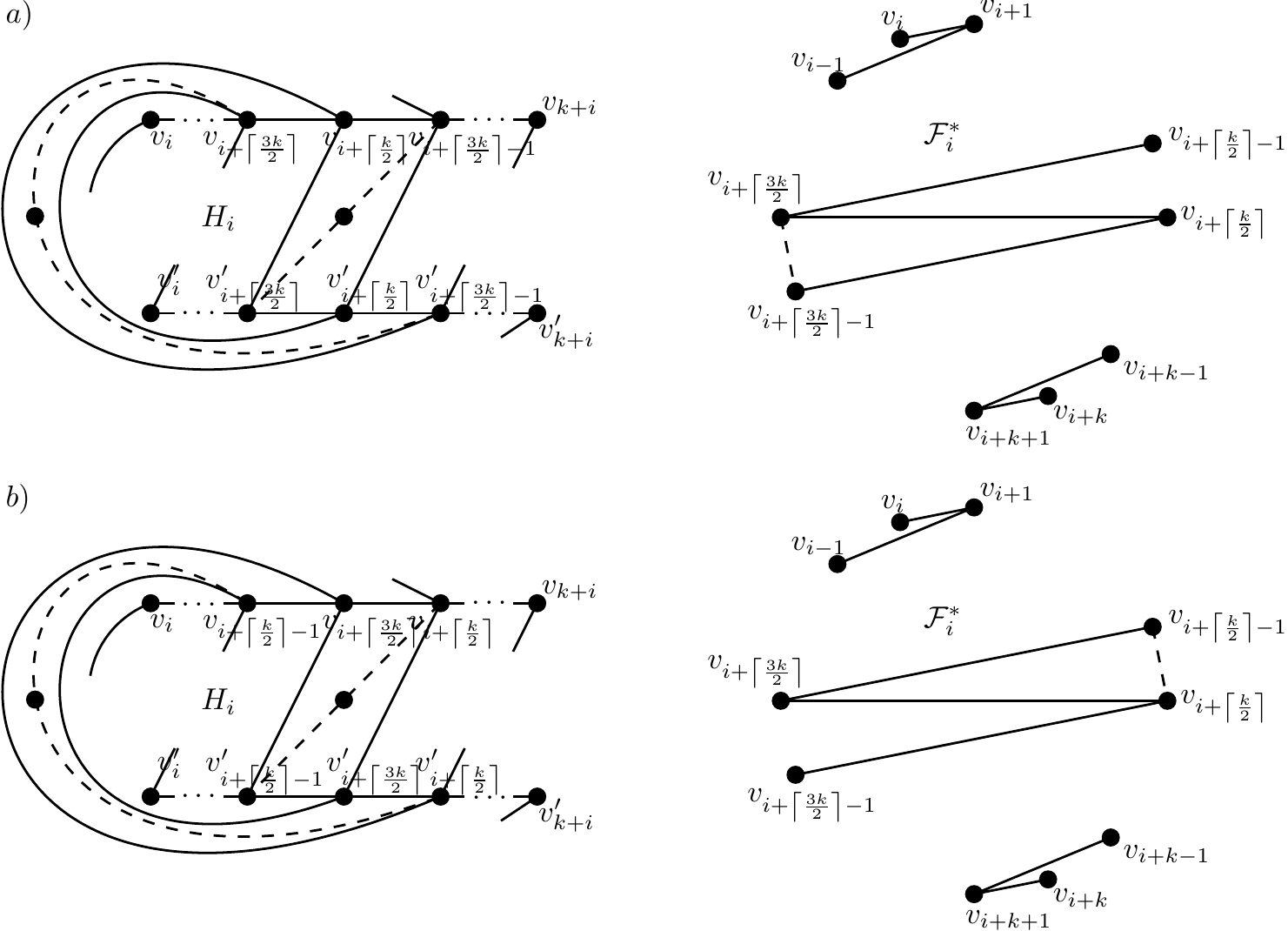}
\caption{\label{Fig6}The graph $H_{i}$ when $k$ is odd and its auxiliary graph $\mathcal{F}^{*}_i$.  Above $a)$ When $i$ is odd. Botton $b)$ When $i$ is even.}
\end{center}
\end{figure}

\item Suppose $k$ even. Similarly that the previous case, for odd $i\in\{1,2,\dots,k\}$, we define the graph $H_{i}$ adding the vertices $x$ and $y$ to the planar subgraph $G_{i}$ and the $4$ edges \[\{xv_{i+\left\lceil \frac{3k}{2}\right\rceil+1},xv'_{i+\left\lceil \frac{3k}{2}\right\rceil},yv'_{i+\left\lceil \frac{3k}{2}\right\rceil+1},yv_{i+\left\lceil \frac{3k}{2}\right\rceil}\}\]
when $\left\lceil \frac{k}{2}\right\rceil$ is even, otherwise
\[\{yv_{i+\left\lceil \frac{3k}{2}\right\rceil+1},yv'_{i+\left\lceil \frac{3k}{2}\right\rceil},xv'_{i+\left\lceil \frac{3k}{2}\right\rceil+1},xv_{i+\left\lceil \frac{3k}{2}\right\rceil}\}.\]
On the other hand, for even $i\in\{1,2,\dots,k\}$, we define the graph $H_{i}$ adding the vertices $x$ and $y$ to the planar subgraph $G_{i}$ and the $4$ edges \[\{xv_{i+\left\lceil \frac{k}{2}\right\rceil},xv'_{i+\left\lceil \frac{k}{2}\right\rceil-1},yv'_{i+\left\lceil \frac{k}{2}\right\rceil},yv_{i+\left\lceil \frac{k}{2}\right\rceil-1}\}\]
when $\left\lceil \frac{k}{2}\right\rceil$ is even, otherwise
\[\{yv_{i+\left\lceil \frac{k}{2}\right\rceil},yv'_{i+\left\lceil \frac{k}{2}\right\rceil-1},xv'_{i+\left\lceil \frac{k}{2}\right\rceil},xv_{i+\left\lceil \frac{k}{2}\right\rceil-1}\}.\]
Note that the graph $H_{i}$ has girth $4$ for all $i$, see Figure \ref{Fig7}.
\begin{figure}[!htbp]
\begin{center}
\includegraphics{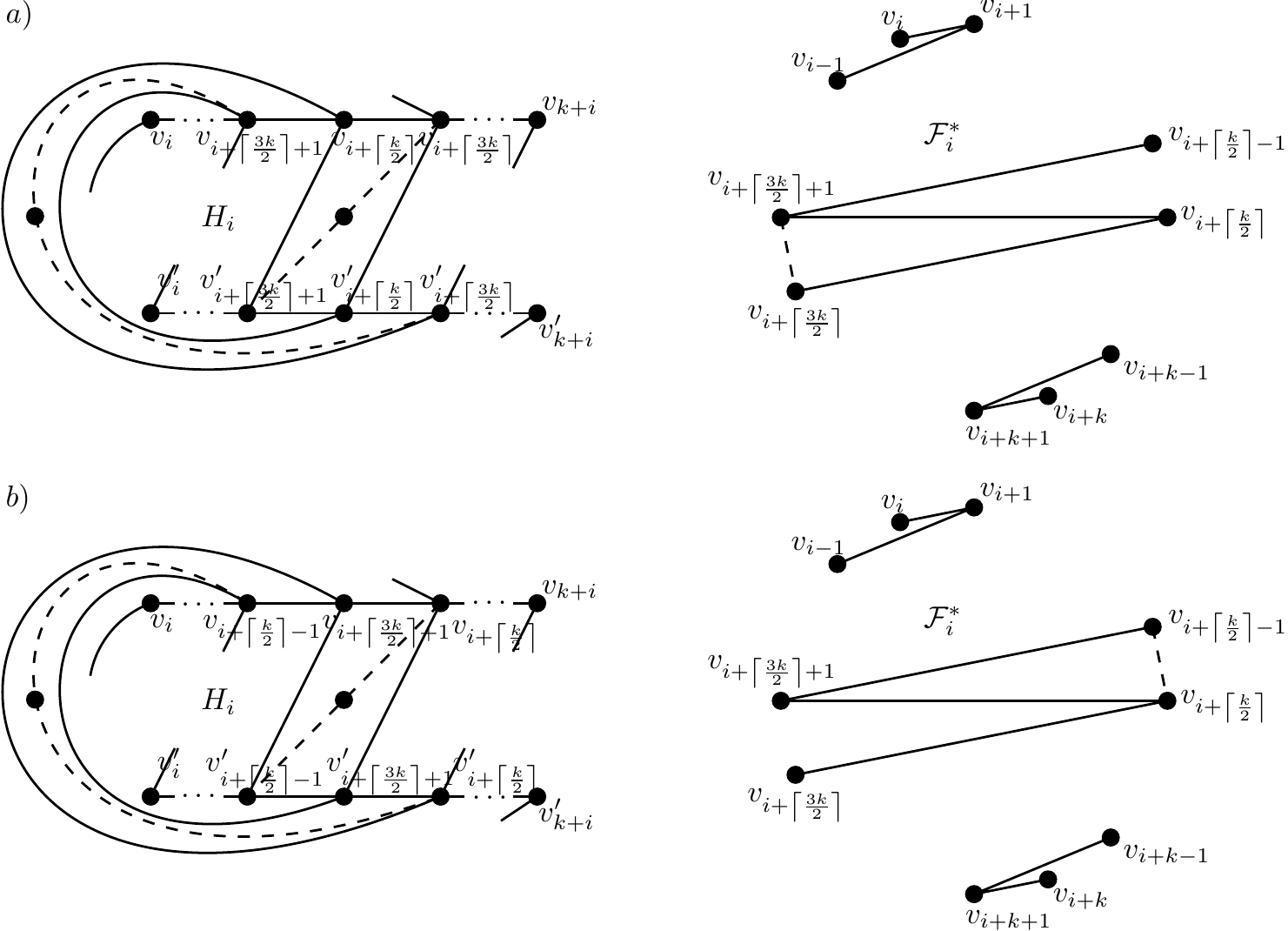}
\caption{\label{Fig7}The graph $H_{i}$ when $k$ is even and its auxiliary graph $\mathcal{F}^{*}_i$.  Above $a)$ When $i$ is odd. Botton $b)$ When $i$ is even.}
\end{center}
\end{figure}
\end{itemize}

In order to verify that each edge of the set \[\{xv'_1,xv_2,xv'_3,xv_3,\dots,xv'_{2k-1},xv_{2k},yv_1,yv'_2,yv_3,yv'_3,\dots,yv_{2k-1},yv'_{2k}\}.\]
is in exactly one subgraph $H_i$, for $i\in\{1,\dots,k\}$, we obtain the unicyclic graph $\mathcal{F}^{*}_i$ identifying $v_j$ and $v'_j$ resulting in $v_j$; identifying $x$ and $y$ resulting in a vertex which is contracted with one of its neighbours. The resulting edge, in dashed, is showed in Figures \ref{Fig6} and \ref{Fig7}. The set of those edges are a perfect matching of $K_{2k}$ proving that the added two paths of length 2 in $G_i$ have end vertices $v_j$ and $v'_{j-1}$, and the other $v'_j$ and $v_{j-1}$. The election of the label of the center vertex is such that one path is $v_{even}xv'_{odd}$ and $v'_{even}yv_{odd}$ and the result follows.
\item Case $n=4k+1$ (for $k\not=2$). Since $K_{4k+1}\subset K_{4k+2}$, we have \[k+1\leq \theta(4,K_{4k+1})\leq\theta(4,K_{4k+2})\leq k+1.\]
\end{enumerate}
For $k=2$, Figure \ref{Fig8} displays a decomposition of three planar graphs of girth at least $4$ proving that $\theta(4,K_9)=\left\lceil \frac{9+2}{4}\right\rceil=3$.
\begin{figure}[!htbp]
\begin{center}
\includegraphics[scale=0.7]{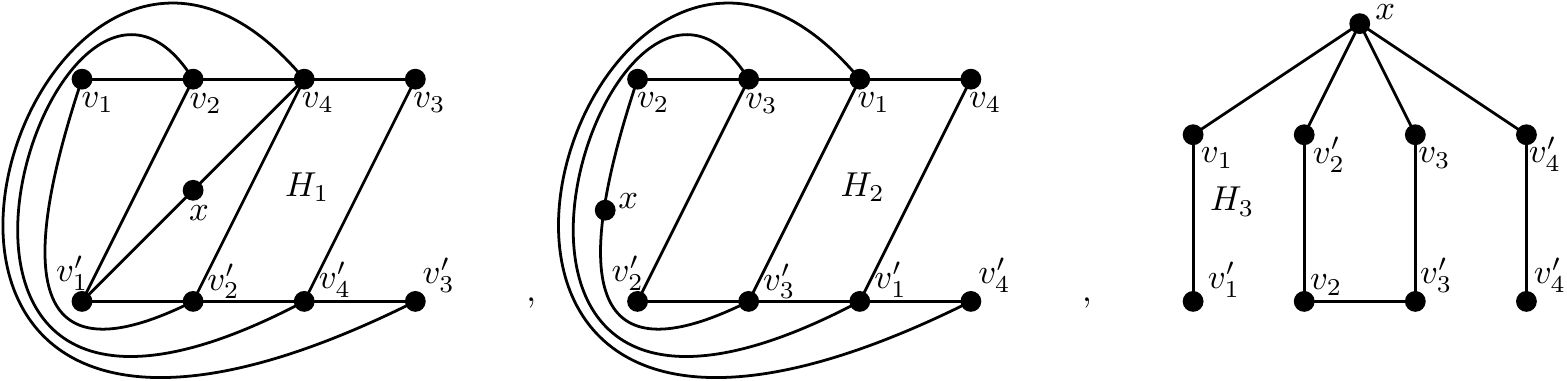}
\caption{\label{Fig8} A planar decomposition of $K_9$ into three subgraphs of girth $4$ and $5$.}
\end{center}
\end{figure}

By the four cases, the theorem follows.
\end{proof}

About the case of $K_{10}$, it follows $3\leq\theta(4,K_{10})\leq4$. We conjecture that $\theta(4,K_{10})=4$.



\begin{thebibliography}{13}

\bibitem{MR0460162}
V.~B. Alekseev and V.~S. Gon{\v{c}}akov, \emph{The thickness of an arbitrary complete graph}, Mat. Sb. (N.S.) \textbf{101(143)} (1976), no.~2, 212--230.

\bibitem{MR0245475}
L.~W. Beineke, \emph{Minimal decompositions of complete graphs into subgraphs with embeddability properties}, Canad. J. Math. \textbf{21} (1969),
  992--1000.

\bibitem{MR0164339}
L.~W. Beineke and F.~Harary, \emph{On the thickness of the complete graph}, Bull. Amer. Math. Soc. \textbf{70} (1964), 618--620.

\bibitem{MR0186573}
L.~W. Beineke and F.~Harary, \emph{The thickness of the complete graph}, Canad. J. Math. \textbf{17} (1965), 850--859.

\bibitem{MR0158388}
L.~W. Beineke, F.~Harary, and J.~W. Moon, \emph{On the thickness of the complete bipartite graph}, Proc. Cambridge Philos. Soc. \textbf{60} (1964),
  1--5.

\bibitem{MR2368647}
J.A. Bondy and U.S.R. Murty, \emph{Graph theory}, Graduate Texts in Mathematics, vol. 244, Springer, New York, 2008.

\bibitem{MR2450569}
G.~Chartrand and P.~Zhang, \emph{Chromatic graph theory}, Discrete Mathematics and its Applications, CRC Press, 2009.

\bibitem{MR1100049}
R.~K. Guy and R.~J. Nowakowski, \emph{The outerthickness \& outercoarseness of graphs. {I}. {T}he complete graph \& the {$n$}-cube}, Topics in combinatorics
  and graph theory ({O}berwolfach, 1990), Physica, Heidelberg, 1990,
  pp.~297--310.

\bibitem{MR0211901}
M.~Kleinert, \emph{Die {D}icke des {$n$}-dimensionalen {W}\"urfel-{G}raphen}, J. Combin. Theory \textbf{3} (1967), 10--15.

\bibitem{MR1617664}
P.~Mutzel, T.~Odenthal and M.~Scharbrodt, \emph{The thickness of graphs: a survey}, Graphs Combin. \textbf{14} (1998), no.~1, 59--73.

\bibitem{MR0157372}
W.~T. Tutte, \emph{The thickness of a graph}, Indag. Math. \textbf{25} (1963), 567--577.

\bibitem{MR3243852}
Y.~Yang, \emph{A note on the thickness of {$K_{l,m,n}$}}, Ars Combin. \textbf{117} (2014), 349--351.

\bibitem{YY}
Y.~Yang, \emph{Remarks on the thickness of {$K_{n,n,n}$}}, Ars Math. Contemp. \textbf{12} (2017), no.~1, 135--144.

\end{thebibliography}
\end{document}